\pdfoutput=1
\documentclass[12pt,reqno]{amsart}
\usepackage[lmargin=1.35in,rmargin=1.35in,tmargin=1.25in,bmargin=1.25in]{geometry}

\usepackage{yfonts}
\usepackage{amsthm,amsfonts,amssymb}

\usepackage{enumitem}
\usepackage[colorlinks]{hyperref}    
\hypersetup{linkcolor=blue, citecolor=blue}

\usepackage[mathcal]{euscript}
\usepackage{booktabs}

\renewcommand{\epsilon}{\varepsilon}

\setcounter{secnumdepth}{2}

\let\originalleft\left
\let\originalright\right
\renewcommand{\left}{\mathopen{}\mathclose\bgroup\originalleft}
\renewcommand{\right}{\aftergroup\egroup\originalright}


\newcommand{\fr}[2]{\frac{#1}{#2}}
\newcommand{\nfr}[2]{#1/#2}
\newcommand{\tf}[2]{\tfrac{#1}{#2}}

\newtheorem{theorem}{Theorem}

\newtheorem{lemma}[theorem]{Lemma}
\newtheorem{corollary}[theorem]{Corollary}

\theoremstyle{definition}
\newtheorem{remark}[theorem]{Remark}

\numberwithin{theorem}{section}
\numberwithin{equation}{section}

\newcommand{\<}{\begin{equation}}
\renewcommand{\>}{\end{equation}}

\newcommand{\lf}{\left}

\newcommand{\rh}{\right}

\newcommand{\rt}[1]{\sqrt{#1}}


\newcommand{\mmod}[1]{\,\,(\mathrm{mod}\,\,#1)}

\newcommand{\cc}{\mathbb{C}}
\newcommand{\nn}{\mathbb{N}}
\newcommand{\zz}{\mathbb{Z}}

\newcommand{\fp}{\mathfrak{p}}
\newcommand{\fX}{\mathfrak{X}}
\newcommand{\fY}{\mathfrak{Y}}

\newcommand{\xa}{\alpha}
\newcommand{\xb}{\beta}
\newcommand{\xc}{\chi}
\newcommand{\xd}{\delta}

\newcommand{\xg}{\gamma}

\newcommand{\xl}{\lambda}

\newcommand{\xF}{\Phi}
\newcommand{\xL}{\Lambda}

\newcommand{\cR}{\mathcal{R}}

\usepackage{fancyhdr}
\pagestyle{fancy}
\fancyhf{}

\fancyhead[LE]{\thepage\quad\textit{ T. Daniels}}
\fancyhead[RE]{}
\fancyhead[LO]{}
\fancyhead[RO]{\textit{Vanishing coefficients in two $q$-series}\quad\thepage}
\setlength{\headheight}{14.0pt}

\usepackage[initials,nobysame]{amsrefs}
\usepackage[foot]{amsaddr}

\makeatletter
\renewcommand{\email}[2][]{%
  \ifx\emails\@empty\relax\else{\g@addto@macro\emails{,\space}}\fi%
  \@ifnotempty{#1}{\g@addto@macro\emails{\textrm{(#1)}\space}}%
  \g@addto@macro\emails{#2}%
}
\makeatother

\makeatletter
\def\section{%
    \@startsection{section}{1}%
    \z@{.7\linespacing\@plus\linespacing}{.5\linespacing}%
    {\normalfont\large\bfseries}%
}
\makeatother

\makeatletter
\def\@seccntformat#1{%
  \protect\textup{\protect\@secnumfont
    \csname the#1\endcsname
\space\space
  }%
}
\makeatother

\AtBeginDocument{%
   \def\MR#1{}
}

\begin{document}

\title{Vanishing coefficients in two $q$-series related to Legendre-signed partitions} 
\author[T.~Daniels]{Taylor Daniels}
\address{Purdue University, 150 N University St, W Lafayette, IN 47907}
\email{daniel84@purdue.edu}
\subjclass[2020]{Primary: 11B65, 11P83. \\ \indent \emph{Keywords and phrases}: $q$-Series, vanishing coefficients, partitions, Legendre symbol.}
\begin{abstract}
    We demonstrate some 10-periodic properties of the coefficients of two $q$-series related to partition numbers signed by the Legendre symbol $(\frac{a}{5})$ and to the Rogers-Ramanujan continued fraction $R(q)$.
\end{abstract}
\maketitle

\section{Introduction}
The \emph{partitions} of a given $n \in \nn$ are the tuples $(a_1,\ldots,a_k)$ of positive integers such that $a_1+\cdots+a_k=n$ and $a_1 \geq \cdots \geq a_k$. In this paper we consider two $q$-series arising naturally from the notion of partitions ``signed'' by the Legendre symbols $\xc_p(n) := (\frac{n}{p})$ associated to odd primes $p$; namely $\xc_p(n)$ is $1$ (or $-1$) when $n$ is (or is not) a quadratic residue modulo $p$, and $\xc_p(n)$ is $0$ when $p \mid n$. For $n \in \nn$ and any partition $\pi = (a_1, a_2, \ldots, a_k)$ of $n$, let
    \begin{align*}
        \xc_p(\pi) &:= \xc_p(a_1)\xc_p(a_2)\cdots\xc_p(a_k) \qquad\text{and}\qquad \xc_p^\dagger(\pi) := (-1)^k\xc_p(\pi).
    \end{align*}
With this notation we define the integers $\mathfrak{p}(n,\chi_p)$ and $\mathfrak{p}(n,\chi_p^\dag)$ via
    \<
        \mathfrak{p}(n,\chi_p) = \sum_{\pi\in\Pi[n]} \xc_p(\pi) \qquad\text{and}\qquad \mathfrak{p}(n,\chi_p^\dagger) = \sum_{\pi\in\Pi[n]} \xc_p^\dagger(\pi),
    \>
where $\Pi[n]$ is the set of all partitions of $n$.

The quantities $\mathfrak{p}(n,\chi_5)$ are the \emph{Legendre-signed partition numbers} (associated to $5$) as introduced and studied in \cite{daniels2024legendre}. In \cite{daniels2024legendre}*{Thm.~1.7}, it is shown that as $n\to\infty$, one has
    \<
        \label{eq:int:P5Form}
		\mathfrak{p}(n,\chi_5) = \big(\tf{3+\sqrt{5}}{960}\big)^{\fr14} n^{-\fr34}\exp\!\Big(\pi\rt{\tf{8}{15}n}\,\Big) \lf[ \mathfrak{S}(n) + O(n^{-\fr15}) \rh],
    \>
    where
    \[
        \mathfrak{S}(n) := 1 + (-1)^n \Big(\fr{3-\sqrt{5}}{2}\Big) + \sqrt{2(5-\sqrt{5})}\cos\!\Big(\fr{2\pi n}{5}-\fr{\pi}{10}\Big).
    \]
Because $\mathfrak{S}(n) = 0$ for all $n \equiv 2 \mmod{10}$ and $\mathfrak{S}(n) \neq 0$ for all other $n$, formula \eqref{eq:int:P5Form} suggests that $\mathfrak{p}(n,\chi_5) = 0$ for all sufficiently large $n \equiv 2 \mmod{10}$. Confirming both this periodic vanishing of $\mathfrak{p}(n,\chi_5)$ and a similar periodic vanishing of $\mathfrak{p}(n,\chi_5^\dagger)$ is the primary objective of this paper.

\begin{theorem}
    \label{thm:X2Y6}
    One has $\mathfrak{p}(10j+2,\chi_5) = 0$ and $\mathfrak{p}(10j+6,\chi_5^\dagger)=0$ for all $j \geq 0$.
\end{theorem}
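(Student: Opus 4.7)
The plan is to first derive product formulas for $P(q) := \sum_n p_5(n) q^n$ and $P^{\dag}(q) := \sum_n p_5^{\dag}(n) q^n$, then perform a $2$-dissection to isolate the even-exponent parts, and finally establish the claimed vanishings via a $5$-dissection using Jacobi's triple product and dissection identities for the Rogers--Ramanujan continued fraction. Since $\xc_5$ is completely multiplicative on the parts of a partition, these series factor as Euler products
\[
P(q) = \prod_{n \geq 1}(1-\xc_5(n)q^n)^{-1}, \qquad P^{\dag}(q) = \prod_{n \geq 1}(1+\xc_5(n)q^n)^{-1}.
\]
Sorting by residue $\pmod 5$ and using $(1+q^n)^{-1} = (1-q^n)/(1-q^{2n})$, these simplify to
\[
P(q) = \frac{A(q)}{B(q)\,A(q^2)}, \qquad P^{\dag}(q) = \frac{B(q)}{A(q)\,B(q^2)},
\]
where $A(q) := (q^2;q^5)_\infty(q^3;q^5)_\infty$ and $B(q) := (q;q^5)_\infty(q^4;q^5)_\infty$ are the standard Rogers--Ramanujan products.

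For the $2$-dissection I would compute the even parts $(P(q)+P(-q))/2$ and $(P^{\dag}(q)+P^{\dag}(-q))/2$. Grouping the factors of $A(\pm q)$ and $B(\pm q)$ by residues $\pmod{10}$ and applying $(x;q)_\infty(-x;q)_\infty = (x^2;q^2)_\infty$ yields
\[
A(q)A(-q) = \frac{B(q^2)^2 A(q^2)}{B(q^4)}, \qquad B(q)B(-q) = \frac{A(q^2)^2 B(q^2)}{A(q^4)}.
\]
Combined with the common-numerator identities
\[
P(q)+P(-q) = \frac{A(q)B(-q)+A(-q)B(q)}{B(q)B(-q)A(q^2)}, \qquad P^{\dag}(q)+P^{\dag}(-q) = \frac{A(q)B(-q)+A(-q)B(q)}{A(q)A(-q)B(q^2)},
\]
this reduces the two assertions to the following: writing $N(q^2) := A(q)B(-q)+A(-q)B(q)$, one must show that the series $N(q)A(q^2)/(A(q)^3 B(q))$ has no coefficient at exponents $\equiv 1 \pmod 5$, and that $N(q)B(q^2)/(B(q)^3 A(q))$ has no coefficient at exponents $\equiv 3 \pmod 5$. (Here the implicit rescaling $q^2 \mapsto q$ converts a residue $10j+2$ or $10j+6$ into a residue $5j+1$ or $5j+3$.)

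For the final $5$-dissection I would invoke the Jacobi-triple-product expansions
\[
A(q)(q^5;q^5)_\infty = \sum_{k \in \zz}(-1)^k q^{k(5k-1)/2}, \qquad B(q)(q^5;q^5)_\infty = \sum_{k \in \zz}(-1)^k q^{k(5k-3)/2},
\]
together with Ramanujan's classical identity
\[
\frac{(q;q)_\infty}{(q^{25};q^{25})_\infty} = \frac{A(q^5)}{B(q^5)} - q - q^2\,\frac{B(q^5)}{A(q^5)}
\]
encoding the $5$-dissection of $(q;q)_\infty$ via the Rogers--Ramanujan continued fraction $R(q) = q^{1/5}B(q)/A(q)$. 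After using these to decompose $N(q)$ into a product of theta functions, I expect the two targeted residue components to collapse to theta sums $\sum_k (-1)^k q^{\varphi(k)}$ whose quadratic exponent functions $\varphi(k)$ visibly omit the forbidden residue $\pmod 5$, yielding both vanishings. The main obstacle is the algebraic bookkeeping at this last stage: cleanly simplifying $N(q) = A(q^{1/2})B(-q^{1/2})+A(-q^{1/2})B(q^{1/2})$ as a product of theta functions and matching it against the denominators $A(q)^3 B(q)$ and $B(q)^3 A(q)$ so that the absent residue classes become manifest is delicate, and is the computational heart of the argument.
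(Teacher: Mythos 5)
Your reductions are all correct and closely parallel the paper's: your Euler products for $P$ and $P^{\dag}$ are exactly the paper's $1/X$ and $1/Y$, the identities $A(q)A(-q)=B(q^2)^2A(q^2)/B(q^4)$ and $B(q)B(-q)=A(q^2)^2B(q^2)/A(q^4)$ check out, and the resulting reformulation (that $N(q)A(q^2)/(A(q)^3B(q))$ must omit exponents $\equiv 1\pmod 5$ and $N(q)B(q^2)/(B(q)^3A(q))$ must omit exponents $\equiv 3\pmod 5$) is a valid restatement of the theorem, playing the same role as the paper's relations $1/X=Y\xj_{10}/\xj_2$ and $1/Y=X\xj_{10}/\xj_2$. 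But the proof stops exactly where the theorem actually lives: the final step is not an argument but a prediction that the targeted residue components will ``collapse to theta sums $\sum_k(-1)^kq^{\varphi(k)}$ whose quadratic exponent functions visibly omit the forbidden residue.'' Nothing in the proposal establishes this, and you yourself flag it as the computational heart.

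Moreover, the predicted endgame is almost certainly not how the cancellation actually happens. A quick expansion gives $N(q)=2(1-q-2q^2+q^3+\cdots)$, so $N$ is not a unary theta series and will at best decompose as a \emph{sum} of theta products (compare the paper's two-term identities $X\xj_5^2=u_1u_2-qu_3u_4$ and $Y\xj_5^2=u_1u_2+qu_3u_4$). More importantly, once everything is 5- or 10-dissected into the theta functions $U_1,\dots,U_4$ and $\cR=R(q^{10})$, the relevant component is a sum of roughly a dozen monomials in these quantities that does not vanish term-by-term or by any visible residue obstruction; it vanishes only after invoking the nontrivial relation $U_1^3U_4^2(U_1U_2+q^{10}U_3U_4)=U_2^2U_3^3(U_1U_2-q^{10}U_3U_4)$ (Corollary \ref{cor:Adiff0}, a consequence of Hirschhorn's identities (41.2.1)--(41.2.2) and the theta-quotient representation of $R$). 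Your outline contains no analogue of this identity, so the dissection you plan would terminate in an expression with no apparent reason to vanish. The gap is therefore not bookkeeping but a missing key lemma.
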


Considering Theorem \ref{thm:X2Y6}, we say that $\fp(n,\chi_5)$ \emph{vanishes on the arithmetic progression} $(10j+2)_{j\geq 0}$.
This vanishing is surprising in light of the following result from \cite{daniels2024legendre}.

\begin{theorem}[\cite{daniels2024legendre}*{Thm.~1.10}]
    \label{thm:novanishing}
    If $p$ is an odd prime such that $p\neq 5$ and $p \not\equiv 1 \mmod{8}$, then the sequence $(\fp(n,\xc_p))_\nn$ does not vanish on any arithmetic progression.
\end{theorem}

We remark that Theorem \ref{thm:novanishing} is established by showing that the asymptotic formulae for $\fp(n,\chi_p)$ (analogous to formula \eqref{eq:int:P5Form}) are dominated by a single positive exponential term, and thus $\fp(n,\chi_p) \to \infty$ for the $p$ in the theorem.
In addition to the vanishings of Theorem \ref{thm:X2Y6}, we establish two further relations between $\mathfrak{p}(n,\chi_5)$ and $\mathfrak{p}(n,\chi_5^\dagger)$.

\begin{theorem}
    \label{thm:XY8-XY0}
    One has $\mathfrak{p}(10j,\chi_5^\dagger) = \mathfrak{p}(10j,\chi_5)$ and $\mathfrak{p}(10j+8,\chi_5^\dagger) = -\mathfrak{p}(10j+8,\chi_5)$ for all $j \geq 0$. 
\end{theorem}

We consider $\mathfrak{p}(n,\chi_5)$ and $\mathfrak{p}(n,\chi_5^\dagger)$ primarily through the lens of $q$-\emph{series}, that is, series of the form $\sum_{n=0}^\infty a_nq^n$ defined for $|q|<1$. Using the notations
    \[
        (z;q)_\infty = \prod_{n=0}^{\infty} (1-zq^n)  \qquad\text{and}\qquad (z_1,\ldots,z_m;q)_\infty = (z_1;q)_\infty\cdots(z_m;q)_\infty,
    \]
where $|q|<1$ and $z\in\cc$, we define 
    \<
        \label{eq:XYdef}
        X(q) := (q,-q^2,-q^3,q^4;q^5)_\infty \qquad\text{and}\qquad Y(q) := (-q,q^2,q^3,-q^4;q^5)_\infty
    \>
so that
    \<
        \label{eq:qseriesXY}
        \fr{1}{X(q)} = \sum_{n=0}^\infty \mathfrak{p}(n,\chi_5) q^n
        \qquad\text{and}\qquad
        \fr{1}{Y(q)} = \sum_{n=0}^\infty \mathfrak{p}(n,\chi_5^\dagger) q^n.
    \>

The key to our analyses of the series in \eqref{eq:qseriesXY} is the recognition that, since
    \[
        X(q)Y(q) = (q^2,q^4,q^6,q^8;q^{10})_\infty = \fr{(q^2;q^2)_\infty}{(q^{10};q^{10})_\infty},
    \]
we have the useful pair of relations
    \<
        \label{eq:XYrels}
        \fr{1}{X(q)} = \fr{Y(q)(q^{10};q^{10})_\infty}{(q^2;q^2)_\infty} \qquad\text{and}\qquad \fr{1}{Y(q)} = \fr{X(q)(q^{10};q^{10})_\infty}{(q^2;q^2)_\infty}.
    \>
In particular, relations \eqref{eq:XYrels} allow us to leverage results on the well-studied function $(q;q)_\infty^{-1}$ into results on $X(q)^{-1}$ and $Y(q)^{-1}$.\newline

\noindent\textbf{Acknowledgements} The author would like to thank Prof.~Trevor Wooley for suggesting this research, and for financially supporting this research in part using Zoltners Professor start-up funding. To assist any readers wishing to verify or better understand our derivations, a Mathematica notebook documenting and implementing our computations is publicly available on the author's personal website. In addition we thank the referees for helpful feedback during publication.

\section{Preliminaries}

The variable $q$ is always a complex number with $|q|<1$, and $q$-\emph{series} are simply elements of $\zz[\hspace{-0.1em}[q]\hspace{-0.1em}]$. Euler's product $(q;q)_\infty$ is denoted by $f = f(q)$, and for integer $m > 0$ we abbreviate $f(q^m)$ as $f_m$. We recall the well-known Jacobi triple product formula
\begin{align}
    \label{eq:JTP}
    (\pm q^{a-b}, \pm q^{a+b},q^{2a};q^{2a})_\infty &= \sum_{m=-\infty}^\infty (\mp 1)^m q^{am^2+bm}.
\end{align}
For any $q$-series $g(q) = \sum_{n=0}^\infty a_n q^n$ we use the notation
    \<
        [g]_{[r]} := \sum_{j=0}^\infty a_{10j+r} q^{10j} \qquad (0 \leq r \leq 9)
    \>
to denote the \emph{type}-10 \emph{components} of $g$, and we observe that
    \begin{align*}
        [g(q)+h(q)]_{[r]} &= [g(q)]_{[r]} + [h(q)]_{[r]}, \\
        [g(q)h(q^{10})]_{[r]} &= h(q^{10})[g(q)]_{[r]}
    \end{align*}
for $0 \leq r \leq 9$ and any $q$-series $h(q)$. By \eqref{eq:qseriesXY} then, Theorem \ref{thm:X2Y6} states that
    \[ 
        [X^{-1}]_{[2]} = [Y^{-1}]_{[6]} = 0,
    \] 
and Theorem \ref{thm:XY8-XY0} states that 
    \[ 
        [Y^{-1}]_{[0]} = [X^{-1}]_{[0]} \qquad\text{and}\qquad [Y^{-1}]_{[8]} = -[X^{-1}]_{[8]}.
    \] 

An $m$--\emph{dissection} of a $q$-series $g(q)$ is any identity of the form
    \[
        g(q) = q^{0}g_0(q^{m}) + q^{1}g_1(q^{m}) + \cdots + q^{m-1}g_{m-1}(q^{m})
    \]
where $g_0, g_1, \ldots, g_{m-1}$ are all $q$-series; we are thus primarily interested in 10-dissections of $X^{-1}$ and $Y^{-1}$. Finally we recall the Rogers-Ramanujan continued fraction
    \<
        \label{eq:R}
        R(q) = \fr{(q,q^4;q^5)_\infty}{(q^2,q^3;q^5)_\infty},
    \>
and since $(q^a;q^b)_\infty(-q^a;q^b)_\infty = (q^{2a};q^{2b})_\infty$ for $a,b \in \nn$, we see that $X(q)$ and $Y(q)$ are related to $R(q)$ via the equalities
    \<
        \label{eq:XYR}
        X(q) = (q^4,q^6;q^{10})_\infty R(q) \qquad\text{and}\qquad Y(q) = (q^2,q^8;q^{10})_\infty R(q)^{-1}.
    \>

\section{The 10-dissections}
\label{sec:dissections}
After multiplying both sides by $f_{10}^3$, relations \eqref{eq:XYrels} state that
    \<
        \label{eq:XYfracs}
        \fr{f_{10}^3}{X} = \fr{Yf_{10}^4}{f_2} \qquad\text{and}\qquad \fr{f_{10}^3}{Y} = \fr{Xf_{10}^4}{f_2},
    \>
which, since $[f_{10}g]_{[r]} = f_{10}[g]_{[r]}$ for $0 \leq r \leq 9$ and any $q$-series $g(q)$, implies that
    \[
        f_{10}^3\left[\fr{1}{X}\right]_{[r]} = \left[\fr{Yf_{10}^4}{f_2}\right]_{[r]} \qquad\text{and}\qquad f_{10}^3\left[\fr{1}{Y}\right]_{[r]} = \left[\fr{Xf_{10}^4}{f_2}\right]_{[r]} \qquad (0 \leq r \leq 9).
    \]
Our plan, therefore, is to find suitable 10-dissections of $1/f_2$, $Xf_{10}^4$, and $Yf_{10}^4$, and use said dissections to draw conclusions about the type-10 components of $X^{-1}$ and $Y^{-1}$.

For the 10-dissection of $1/f_2$, we recall \cite{hirschhorn:powerofq}*{eqn.~(8.4.4)} the 5-dissection
    \<
    \label{eq:phiR}
    \begin{aligned}
        \fr{1}{f} = \fr{f_{25}^5}{f_{5}^6} \Big( R(q^5)^{-4} &+qR(q^5)^{-3} +2q^2R(q^5)^{-2} +3q^3R(q^5)^{-1} +5q^4 \\
        &\hspace{-0.65em} -3q^5R(q^5) +2q^6R(q^5)^2 -q^7R(q^5)^3 +q^8R(q^5)^4 \Big),
    \end{aligned}
    \>
which, upon substituting $q^2$ for $q$, yields the 10-dissection
    \<
    \label{eq:phi2R}
    \begin{aligned}
        \fr{1}{f_2} = \fr{f_{50}^5}{f_{10}^6} \Big(& R(q^{10})^{-4} +q^2R(q^{10})^{-3} +2q^4R(q^{10})^{-2} +3q^6R(q^{10})^{-1} +5q^8 \\
    &\qquad\hspace{-0.3em} -3q^{10}R(q^{10}) +2q^{12}R(q^{10})^2 -q^{14}R(q^{10})^3 +q^{16}R(q^{10})^4 \Big).
    \end{aligned}
    \>
We now consider 10-dissections for $X f_{10}^4$ and $Y f_{10}^4$. For $0 \leq k \leq 4$, using the Jacobi triple product formula \eqref{eq:JTP} let
    \begin{align}
        \label{eq:uk}
        u_k(q) &:= (q^{5-k},q^{5+k},q^{10};q^{10})_\infty = \sum_{m=-\infty}^\infty (-1)^m q^{5m^2+km}, \\
        \label{eq:ukdag}
        u_k^{\dag}(q) &:= (-q^{5-k},-q^{5+k},q^{10};q^{10})_\infty = \sum_{m=-\infty}^\infty q^{5m^2+km},
    \end{align}
and
    \<
        \label{eq:Uk}
        U_k(q) := u_k(q^{10}) = (q^{50-10k}, q^{10k+50}, q^{100}; q^{100})_\infty = \sum_{m=-\infty}^\infty (-1)^m q^{50m^2+10km}.
    \>
We note that $u^{\dag}_k(q)$ is generally not the same as $u_k(-q)$. As
    \[
        Xf_{10}^4 = u_1u_2^{\dag}u_3^{\dag}u_4 \qquad\text{and}\qquad Yf_{10}^4 = u_1^{\dag}u_2u_3u_4^{\dag},
    \]
relations \eqref{eq:XYfracs} imply that
    \<
        \label{eq:XYinv-u}
        \nfr{f_{10}^3}{X} = \nfr{u_1^{\dag}u_2u_3u_4^{\dag}\,}{f_2}, \qquad\text{and}\qquad \nfr{f_{10}^3}{Y} = \nfr{u_1u_2^{\dag}u_3^{\dag}u_4\,}{f_2}.
    \>
From the definitions of $R(q)$ and $U_k(q)$, we further observe that
    \<
        \label{eq:RU}
        R(q^{10}) = \fr{(q^{10},q^{40};q^{50})_\infty}{(q^{20},q^{30};q^{50})_\infty} = \fr{U_1(q)U_4(q)}{U_2(q)U_3(q)}.
    \>

In the remainder of our discussions, it is convenient to let
    \<
        \cR := R(q^{10}) \qquad\text{and}\qquad \xF := \nfr{f_{50}^2}{f_{100}}.
    \>
Considering \eqref{eq:XYinv-u}, we now compute 10-dissections for certain quantities of the form $u_i u_j^\dagger$.

\begin{lemma}
\label{lem:uiDissections}
    One has
        \begin{align}
            \label{eq:u1-u3dgV}
            u_1u_3^{\dag} &= \lf(U_1 +q^2U_1\cR -q^4U_3 -2q^6U_2U_3\xF^{-1} + 0q^8 \rh)\xF, \\
            \label{eq:u1dg-u3V}
            u_1^{\dag}u_3 &= \lf(U_1 -q^2U_3\cR^{-1} +q^4U_3 +0q^6 - 2q^8U_1U_4\xF^{-1} \rh)\xF,
        \end{align}
    and
        \begin{align}
        \label{eq:u2-u4dgV}
        &\begin{aligned}
            u_2u_4^{\dag} &= U_1^2 +qU_2\xF + 0q^2 -q^3U_2^2 -q^4U_1U_3 -q^{15}U_4^2 +0q^6 -q^7U_4\xF \\
            &\qquad  -q^8U_3^2 +q^9U_2U_4,
        \end{aligned} \\
        \label{eq:u2dg-u4V}
        &\begin{aligned}
            u_2^{\dag}u_4 &= U_1^2 -qU_2\xF +0q^2 +q^3U_2^2 -q^4U_1U_3 +q^{15}U_4^2 +0q^6 +q^7U_4\xF \\
            &\qquad  -q^8U_3^2 -q^9U_2U_4.
        \end{aligned}     
        \end{align} 
\end{lemma}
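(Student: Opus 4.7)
The approach is a two-stage change of variables applied to each defining double series. For $u_1 u_3^{\dag} = \sum_{m,n \in \zz} (-1)^m q^{5m^2 + m + 5n^2 + 3n}$, and analogously for the other three products, the first stage substitutes $(m, n) = (M_1 a + N_1 b + j,\, M_2 a + N_2 b + k)$, where $(M_1, M_2)$ and $(N_1, N_2)$ are orthogonal integer vectors of norm $5$. This parametrizes $\zz^2$ five-to-one over coset representatives $(j, k)$ of the index-$5$ sublattice $L \subset \zz^2$ spanned by $(M_1, M_2)$ and $(N_1, N_2)$, sending $5(m^2 + n^2) \mapsto 25(a^2 + b^2) + (\text{linear in } a, b) + 5(j^2 + k^2)$. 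We choose $(M_1, M_2, N_1, N_2) = (2, 1, -1, 2)$ for \eqref{eq:u1-u3dgV} and \eqref{eq:u1dg-u3V} (so that $m + 3n = 5a + 5b + \text{const}$), and $(1, 2, -2, 1)$ for \eqref{eq:u2-u4dgV} and \eqref{eq:u2dg-u4V} (so that $2m + 4n = 10a + \text{const}$). With coset representatives $(j, 0)$ for $j \in \{0, 1, 2, 3, 4\}$, each product decouples into a sum of five one-variable theta series products, which---after the shift relations $u_{k+10}(q) = -q^{-2k-5} u_k(q)$ and $u_{k+10}^{\dag}(q) = q^{-2k-5} u_k^{\dag}(q)$---reduce to terms of the form $q^c \cdot u_l(q^5) u_{l'}^{\dag}(q^5)$ with $l, l' \in \{0, 1, 2, 3, 4, 5\}$.

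The second stage evaluates each residual product $u_l(q^5) u_{l'}^{\dag}(q^5)$ by a further change of variables $(s, t) = (m + n, m - n)$, which $2$-dissects the double sum by the common parity of $s$ and $t$. Each half, upon applying the Jacobi Triple Product \eqref{eq:JTP} and elementary shifts, becomes a product of theta series at base $q^{100}$ identifiable as one of the $U_k$ (noting $U_0 = \xF$ as a byproduct). One obtains intermediate identities such as $u_k(q^5) u_k^{\dag}(q^5) = U_k \xF$ for $k \in \{1, 2, 3, 4\}$, together with two-term evaluations like
\[
    u_1^{\dag}(q^5) u_3(q^5) = U_1 U_2 - q^{10} U_3 U_4, \qquad u_2(q^5) u_0^{\dag}(q^5) = U_1^2 - q^{15} U_4^2,
\]
whose two summands feed distinct residue classes modulo $10$ in the final assembly. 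To rewrite the dissections \eqref{eq:u1-u3dgV} and \eqref{eq:u1dg-u3V} in the $\cR$-form stated, we invoke the pair of Rogers--Ramanujan-type identities
\[
    U_1 U_2 - q^{10} U_3 U_4 = U_1 \cR\, \xF, \qquad U_1 U_2 + q^{10} U_3 U_4 = U_3 \cR^{-1}\, \xF,
\]
each reducing (via $\cR = U_1 U_4/(U_2 U_3)$) to a classical $R(q^{10})$-identity in the Ramanujan tradition (cf.\ \cite{hirschhorn:powerofq}). Finally, \eqref{eq:u2dg-u4V} follows from \eqref{eq:u2-u4dgV} by $q \mapsto -q$, since $u_k(-q) = u_k^{\dag}(q)$ for even $k$ while $U_k$, $\xF$, and $\cR$ are all invariant as functions of $q^{10}$.

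The main obstacle will be the bookkeeping: tracking $q$-power shifts through the five single-variable theta series produced by each lattice substitution, and correctly distributing those coset contributions which split across two residue classes modulo $10$ (for instance, the $j = 0$ piece of $u_2 u_4^{\dag}$, which contributes $U_1^2$ to the $q^0$ component and $-q^{15} U_4^2$ to the $q^5$ component). Aside from this, the only truly non-mechanical inputs are the two Rogers--Ramanujan identities displayed above; once they are established, the four dissections follow by summation.
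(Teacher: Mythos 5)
Your proposal is correct in substance and is a close cousin of the paper's argument, but the execution differs in two genuine ways. First, for \eqref{eq:u2-u4dgV} the paper dissects the double sum over a single index-$10$ sublattice (spanned by $(3,1)$ and $(-1,3)$), so that each of the ten cosets lands in one residue class modulo $10$ and factors immediately into two theta series of base $q^{100}$; you instead use an index-$5$ lattice followed by the parity split $(s,t)=(m+n,m-n)$, so each of your five cosets feeds two residue classes, exactly as you anticipate with the $U_1^2-q^{15}U_4^2$ example (the $0q^2$ and $0q^6$ terms arise when the odd-parity half carries a factor $U_5=0$). Both work; the paper's choice trades your second stage for a longer table of cosets. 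Second, and more substantively, your route makes the identities $U_1U_2-q^{10}U_3U_4=U_1\cR\xF$ and $U_1U_2+q^{10}U_3U_4=U_3\cR^{-1}\xF$ a prerequisite for \eqref{eq:u1-u3dgV} and \eqref{eq:u1dg-u3V}: your $j=2$ coset yields $q^2u_1^{\dag}(q^5)u_3(q^5)=q^2(U_1U_2-q^{10}U_3U_4)$, which must then be converted to $q^2U_1\cR\xF$. The paper instead reads off that coefficient directly as an infinite product equal to $U_1^2U_4\xF/(U_2U_3)=U_1\cR\xF$ using only $\cR=U_1U_4/(U_2U_3)$, and isolates your two displayed identities as a separate result (Lemma \ref{lem:Udiff}, proved by citing Hirschhorn) needed only later; so your plan is sound provided you genuinely establish them rather than gesture at them---after substituting $\cR=U_1U_4/(U_2U_3)$ they do not become formal, but remain equivalent to Hirschhorn's (41.2.1)--(41.2.2), so that citation is doing real work. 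One concrete correction: the shift relations should read $u_{k+10}(q)=-q^{-k-5}u_k(q)$ and $u_{k+10}^{\dag}(q)=q^{-k-5}u_k^{\dag}(q)$ (replace $m$ by $m-1$ in the sum); your exponent $-2k-5$ is wrong for $k\neq 0$ and would derail precisely the bookkeeping you identify as the main hazard.
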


\begin{remark}
    \label{rem:5dissection}
    In the following proof, our derivation of the 10-dissection \eqref{eq:u1-u3dgV} appears no differently than the derivation of a 5-dissection. This is because 
        \[
            u_1u_3^{\dag} = (-q^{2},q^{4},q^{6},-q^{8},q^{10},q^{10};q^{10})_\infty
        \]
    only involves even powers of $q$ to begin with, and therefore we need only consider the five components of types 0, 2, 4, 6, and 8 of $u_1u_3^\dag$.
\end{remark}

\begin{proof}
From the definitions of $u_k$ and $u_k^\dag$ we see that
    \<
        \label{eq:u1u3dagsum}
        u_1u_3^\dag = \sum_{m,n=-\infty}^\infty (-1)^m q^{5m^2+m+5n^2+3n}.
    \>
Loosely speaking, the idea for the 10-dissection of this sum (essentially a 5-dissection, per Remark \ref{rem:5dissection}), is to separate the sum over $m,n \in \zz$ into five subsums, where in each subsum we use the change(s) of variables
    \[
    \begin{aligned}
        m &= \xl_r^{(1)} + 2\mu - \nu \\
        n &= \xl_r^{(2)} + \mu + 2\nu
    \end{aligned}
    \qquad (\text{for $0 \leq r \leq 4$ and $\mu,\nu \in \zz$}),
    \]
where the five pairs $\xl_r = (\xl_r^{(1)},\xl_r^{(2)})$ are selected so that summing over $0 \leq r \leq 4$ and $\mu,\nu \in \zz$ exactly covers the sum over $m,n \in \zz$. To express this in a way that is both formal and succinct, we find it convenient to take a slightly geometric point of view.

Thinking of $\zz^2$ as the standard square lattice in the Euclidean plane, define $g:\zz^2\to\zz$ via $g((m,n))=5m^2+m+5n^2+3n$, and let $\xL \subset \zz^2$ denote the lattice generated by the \emph{row} vectors $(2,1)$ and $(-1,2)$. That is, let
    \[
        \xL = (2,1)\zz + (-1,2)\zz.
    \]
One may easily check that the 5 cosets 
    \<
    \label{eq:Cosets}
    \begin{aligned}
        &\xL_0 := (0,0) + \xL, \quad \xL_1 := (1,0) + \xL, \quad \xL_2 := (0,-1) + \xL,\\
        &\xL_3 := (0,1) + \xL, \quad \text{and} \quad \xL_4 := (-1,0)+\xL
    \end{aligned}
    \>
partition $\zz^2$ and that $g((m,n)) \equiv r \,\,(\mathrm{mod}\, 5)$ on $\xL_r$ for $0 \leq r \leq 4$.
Next, it is convenient to abuse notation slightly and write $g$ as a function on the ``affine'' set $\zz^2 \times \{1\}$, identifying $g((m,n,1))=g((m,n))$. Doing this, by defining $\xl_r = (\xl_r^{(1)},\xl_r^{(2)})$ so that $\xL_r = \xl_r + \xL$ in \eqref{eq:Cosets}, and writing 
    \[
    B_r := \left(\begin{array}{ccc}
        2 & 1 & 0 \\
        -1 & 2 & 0 \\
        \xl_r^{(1)} & \xl_r^{(2)} & 1
    \end{array}\right),
    \]
we may succinctly write the 10-dissection of $u_1u_3^{\dag}$ as
    \<
        \label{eq:u1-u3dgSum}
        u_1u_3^{\dag} = \sum_{(m,n)\in\zz^2} (-1)^m q^{g((m,n,1))} = \sum_{r=0}^4 \sum_{(\mu,\nu) \in \zz^2} (-1)^{2\mu-\nu+\xl_r^{(1)}} q^{g((\mu,\nu,1)B_r)}.
    \>

From all this, one may quickly compute that
    \begin{align*}
        u_1u_3^{\dag} &= \lf(\sum_{\mu,\nu=-\infty}^\infty (-1)^\nu q^{25\mu^2+5\mu+25\nu^2+5\nu}\rh) - q^6\lf(\sum_{\mu,\nu=-\infty}^\infty (-1)^\nu q^{25\mu^2+25\mu+25\nu^2+5\nu}\rh) \\
        & \qquad + q^2\lf(\sum_{\mu,\nu=-\infty}^\infty (-1)^\nu q^{25\mu^2+5\mu+25\nu^2+15\nu}\rh) + q^8\lf(\sum_{\mu,\nu=-\infty}^\infty(-1)^\nu q^{25\mu^2+15\mu+25\nu^2+25\nu}\rh) \\
        & \qquad - q^4\lf(\sum_{\mu,\nu=-\infty}^\infty(-1)^\nu q^{25\mu^2+15\mu+25\nu^2+15\nu}\rh),
    \end{align*}
which is
    \<
    \label{eq:u1-u3dgRaw}
    \begin{aligned}
        &= (-q^{20},q^{20},-q^{30},q^{30},q^{50},q^{50};q^{50})_\infty 
            -q^{6}(-1,q^{20},q^{30},-q^{50},q^{50},q^{50};q^{50})_\infty \\
        &\qquad +q^2(q^{10},-q^{20},-q^{30},q^{40},q^{50},q^{50};q^{50})_\infty  +q^{8}(1,-q^{10},-q^{40},q^{50},q^{50};q^{50})_\infty \\
        &\qquad -q^4(-q^{10},q^{10},-q^{40},q^{40},q^{50},q^{50};q^{50})_\infty.
    \end{aligned}
    \>
Next, recalling that $\cR = R(q^{10}) = U_1U_4/U_2U_3$, in \eqref{eq:u1-u3dgRaw} we observe that the coefficient of $q^2$ is
    \begin{align*}
        &(q^{10},-q^{20},-q^{30},q^{40},q^{50},q^{50};q^{50})_\infty = \fr{(q^{10},q^{40},q^{40},q^{60},q^{60},q^{90};q^{100})_\infty}{(q^{20},q^{30};q^{50})_\infty} \lf(\fr{f_{50}^2f_{100}^3}{f_{100}^3}\rh) \\
        &\qquad = \fr{U_1^2U_4}{U_2U_3} \lf(\fr{f_{50}^2}{f_{100}}\rh) = U_1\cR \xF,
    \end{align*}
and (since $(-1;q^{50})_\infty = 2(-q^{50};q^{50})_\infty$) that the coefficient of $-q^{6}$ is
    \[
        (-1,q^{20},q^{30},-q^{50},q^{50},q^{50};q^{50})_\infty = 2U_2U_3.
    \]
Similarly considering the other terms in \eqref{eq:u1-u3dgRaw}, we deduce that
    \[
        u_1u_3^{\dag} = U_1\xF + q^2U_1\xF\cR - q^4U_3\xF - 2q^6U_2U_3 + 0q^8,
    \]
which is formula \eqref{eq:u1-u3dgV}. Equation \eqref{eq:u1dg-u3V} is established by first replacing $(-1)^m$ by $(-1)^n$ in equation \eqref{eq:u1u3dagsum} and then simply repeating our arguments using the same $\xL$, $\xl_r$, and $B_r$.

To derive equation \eqref{eq:u2-u4dgV}, we use a setup similar to the one above. Namely, we write
    \[
        u_2 u_4^{\dag} = \sum_{m,n = -\infty}^\infty (-1)^m q^{h((m,n,1))},
    \]
where
    \[
        h((m,n,1)) := 5m^2 + 2m + 5n^2 + 4n,
    \]
and this time we define
    \[
        \xL = (3,1)\zz + (-1,3)\zz, \qquad \Lambda_r = \lambda_r + \Lambda,
    \]
    \[
    \begin{alignedat}{5}
        &\xl_0 = (0,0), \quad &&\xl_1 = (0,-1), \quad &&\xl_2 = (-1,1), \quad &&\xl_3 = (-1,0), \quad &&\xl_4 = (-1,-1), \\
        &\xl_5 = (-1,-2), \quad &&\xl_6 = (1,1), \quad &&\xl_7 = (1,0), \quad &&\xl_8 = (1,-1), \quad &&\xl_9 = (0,1),
    \end{alignedat}
    \]
and
    \[
        B_r := \left(\begin{array}{ccc}
            3 & 1 & 0 \\
            -1 & 3 & 0 \\
            \xl_r^{(1)} & \xl_r^{(2)} & 1
        \end{array}\right) \qquad (r=0,1,2,\ldots,9).
    \]
Defined this way, we have $h((m,n,1)) \equiv r \mmod{10}$ on $\Lambda_r$, and similar to equation \eqref{eq:u1-u3dgSum} we now have
    \[
        u_2 u_4^{\dag} = \sum_{r=0}^9 \sum_{(\mu,\nu)\in\zz^2} (-1)^{3\mu-\nu+\xl_r^{(1)}} q^{h((\mu,\nu,1)B_r)}.
    \]
Tabulating
    \[
    \begin{array}{c|cc}
    r & (-1)^{\xl_r^{(1)}} & h((\mu,\nu,1)B_r) \\[0.2em]
    \hline && \\[-0.8em]
    0 & +1 & 0+50\mu^2+10\mu +50\nu^2+10\nu \\
    1 & +1 & 1+50\mu^2+0\mu +50\nu^2-20\nu \\
    2 & -1 & 12+50\mu^2-10\mu +50\nu^2+50\nu \\
    3 & -1 & 3+50\mu^2-20\mu +50\nu^2+20\nu \\
    4 & -1 & 4+50\mu^2-30\mu +50\nu^2-10\nu \\
    5 & -1 & 15+50\mu^2-40\mu +50\nu^2-40\nu \\
    6 & -1 & 16+50\mu^2+50\mu +50\nu^2+30\nu \\
    7 & -1 & 7+50\mu^2+40\mu +50\nu^2+0\nu \\
    8 & -1 & 8+50\mu^2+30\mu +50\nu^2-30\nu \\
    9 & +1 & 9+50\mu^2+20\mu +50\nu^2+40\nu, \\
    \end{array}
    \]
we may at once read off the formula
    \[
    u_2u_4^{\dag} = U_1^2 +qU_0U_2 -q^{12}U_1U_5 -q^3U_2^2 -q^4U_1U_3 -q^{15}U_4^2 -q^{16}U_3U_5 -q^7U_0U_4 -q^8U_3^2 +q^9U_2U_4.
    \]
Equation \eqref{eq:u2-u4dgV} then follows, since
\[
    U_0 = (q^{50},q^{50},q^{100};q^{100})_\infty = \fr{f_{50}^2}{f_{100}} = \xF \qquad\text{and}\qquad U_5 = (1,q^{100},q^{100};q^{100})_\infty = 0.
\]
Finally, equation \eqref{eq:u2dg-u4V} is immediate from \eqref{eq:u2-u4dgV} since $u_2^{\dag}(q)u_4(q)=u_2(-q)u_4^{\dag}(-q)$.
\end{proof}

Last before our explicit considerations of $[X^{-1}]_{[2]}$ and $[Y^{-1}]_{[6]}$, we record two identities very useful in our technical computations.

\begin{lemma}
\label{lem:Udiff}
    One has
    \begin{align}
        \label{eq:VRminus}
        U_1 \xF \cR &= U_1U_2-q^{10}U_3U_4,\\
        \label{eq:VRplus}
        U_3 \xF \cR^{-1} &= U_1U_2+q^{10}U_3U_4.
    \end{align}
\end{lemma}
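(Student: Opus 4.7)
The plan is to express each of the products $U_1\xF$, $U_3\xF$, $U_1U_4$, $U_2U_3$, $U_1U_2$, and $q^{10}U_3U_4$ in a common collection of one-dimensional theta functions; once this is done, the two identities become routine cancellations in the same ratio.

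By the Jacobi Triple Product, $U_k = \sum_{m\in\zz}(-1)^m q^{50m^2+10km}$ for $k\in\{0,1,2,3,4\}$ (with $U_0=\xF$), so each product $U_jU_k$ is a sum over $(m,n)\in\zz^2$ with summand $(-1)^{m+n}q^{50(m^2+n^2)+10(jm+kn)}$. I would split this sum by the parity of $m+n$ via the bijective substitutions $(m,n)=(a+b,a-b)$ when $m+n$ is even and $(m,n)=(a+b+1,a-b)$ when $m+n$ is odd ($a,b\in\zz$). In each case the quadratic form decouples into independent quadratics in $a$ and $b$, reducing each $U_jU_k$ to a combination of products of the one-dimensional theta sums
\[
S_c := \sum_{a\in\zz} q^{100a^2+10ca}.
\]
The shift identity $S_{c+10}=q^{-10c}S_{c-10}$ (from $a\mapsto a-1$) together with the symmetry $S_{-c}=S_c$ then reduce everything to $S_1,S_3,S_5,S_7,S_9$.

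A bit of bookkeeping yields the factorizations
\begin{align*}
U_1\xF &= (S_1-q^{20}S_9)(S_1+q^{20}S_9), &U_3\xF &= (S_3-q^{10}S_7)(S_3+q^{10}S_7), \\
U_1U_4 &= S_5(S_3-q^{10}S_7), &U_2U_3 &= S_5(S_1-q^{20}S_9), \\
U_1U_2-q^{10}U_3U_4 &= (S_1+q^{20}S_9)(S_3-q^{10}S_7), \\
U_1U_2+q^{10}U_3U_4 &= (S_1-q^{20}S_9)(S_3+q^{10}S_7).
\end{align*}
These share the two ``atoms'' $S_1\pm q^{20}S_9$ and $S_3\pm q^{10}S_7$, together with the factor $S_5$, which is precisely what makes the lemma work. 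From $U_1U_4=S_5(S_3-q^{10}S_7)$ and $U_2U_3=S_5(S_1-q^{20}S_9)$, we read off $\cR = (S_3-q^{10}S_7)/(S_1-q^{20}S_9)$, and therefore
\[
U_1\xF\cR \;=\; (S_1-q^{20}S_9)(S_1+q^{20}S_9)\cdot\frac{S_3-q^{10}S_7}{S_1-q^{20}S_9} \;=\; (S_1+q^{20}S_9)(S_3-q^{10}S_7) \;=\; U_1U_2-q^{10}U_3U_4,
\]
which is \eqref{eq:VRminus}; the identity \eqref{eq:VRplus} follows by the same cancellation applied to the factorizations of $U_3\xF$, $U_1U_4$, $U_2U_3$, and $U_1U_2+q^{10}U_3U_4$.

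The main obstacle is not conceptual but computational: carrying out the six parity-split double-sum decompositions accurately and tracking the index shifts $S_{c+10}=q^{-10c}S_{c-10}$ for $c\in\{1,3,5,7\}$. The essential insight that turns this bookkeeping into a proof is recognizing the recurring atomic factors $S_1\pm q^{20}S_9$ and $S_3\pm q^{10}S_7$, after which both identities of Lemma~\ref{lem:Udiff} collapse to a single common cancellation.
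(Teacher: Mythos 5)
Your proof is correct, but it takes a genuinely different route from the paper. The paper deduces Lemma \ref{lem:Udiff} from two identities quoted from Hirschhorn's book (eqns.\ (41.2.1)--(41.2.2)), namely $X\xj_5^2 = u_1u_2 - qu_3u_4$ and $Y\xj_5^2 = u_1u_2 + qu_3u_4$, combined with the factorizations $X = u_1R/\xj_{10}$ and $Y = u_3R^{-1}/\xj_{10}$ and the substitution $q \mapsto q^{10}$; it is short but rests on an external reference. You instead give a self-contained derivation from the Jacobi Triple Product alone, splitting each double theta sum $U_jU_k = \sum_{m,n}(-1)^{m+n}q^{50(m^2+n^2)+10(jm+kn)}$ by the parity of $m+n$ so that the quadratic form decouples into the one-variable sums $S_c$. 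I checked your stated factorizations and they are all correct (e.g.\ $U_1\xF = S_1^2 - q^{40}S_9^2$, $U_1U_2 - q^{10}U_3U_4 = (S_1+q^{20}S_9)(S_3-q^{10}S_7)$, and $\cR = (S_3-q^{10}S_7)/(S_1-q^{20}S_9)$ after the common factor $S_5$ cancels), so the remaining ``bookkeeping'' is routine even though your write-up does not carry it out in full. Your method is very much in the spirit of the paper's own proof of Lemma \ref{lem:uiDissections}, which performs the same kind of lattice/coset splitting of double theta sums; what it buys is a uniform, reference-free treatment in which both identities visibly collapse to one cancellation among the shared atoms $S_1 \pm q^{20}S_9$ and $S_3 \pm q^{10}S_7$ (and, as a byproduct, it reproves the two Hirschhorn identities). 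What the paper's route buys is brevity and an explicit link to the classical $2$-dissections of $X\xj_5^2$ and $Y\xj_5^2$.
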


\begin{proof}
    We first recall \cite{hirschhorn:powerofq}*{eqn.~(41.2.1)} and \cite{hirschhorn:powerofq}*{eqn.~(41.2.2)}, which state that
    \[
        (q,-q^2,-q^3,q^4,q^5,q^5;q^5)_\infty = (q^3,q^4,q^6,q^7,q^{10},q^{10};q^{10})_\infty - q(q^1,q^2,q^8,q^9,q^{10},q^{10};q^{10})_\infty,
    \]
    \[
        (-q,q^2,q^3,-q^4,q^5,q^5;q^5)_\infty = (q^3,q^4,q^6,q^7,q^{10},q^{10};q^{10})_\infty + q(q^1,q^2,q^8,q^9,q^{10},q^{10};q^{10})_\infty,
    \]
respectively; in our notation these become
    \[
        X f^2_5 = u_1u_2 - qu_3u_4  \qquad\text{and}\qquad Y f^2_5 = u_1u_2 + qu_3u_4,
    \]
respectively. Since
    \[
        X =(q^1,-q^2,-q^3,q^4;q^5)_\infty = (q^4,q^6;q^{10})_\infty \cdot \fr{(q^1,q^4;q^5)_\infty}{(q^2,q^3;q^5)_\infty} = \fr{u_1R}{f_{10}},
    \]
    \[
        Y =(-q^1,q^2,q^3,-q^4;q^5)_\infty = (q^2,q^8;q^{10})_\infty \cdot \fr{(q^2,q^3;q^5)_\infty}{(q^1,q^4;q^5)_\infty} = \fr{u_3R^{-1}}{f_{10}},
    \]
we deduce that
    \[
        \fr{u_1f^2_5R}{f_{10}} = u_1u_2 - qu_3u_4  \qquad\text{and}\qquad \fr{u_3f^2_5R^{-1}}{f_{10}} = u_1u_2 + qu_3u_4,
    \]    
which are equations \eqref{eq:VRminus} and \eqref{eq:VRplus}, respectively, with $q$ replaced by $q^{1/10}$. 
\end{proof}

\section{The proof of Theorem \ref{thm:X2Y6}}

We recall that our goal is to show that 
    \<
        \label{eq:X2Y60}
        [X^{-1}]_{[2]} = [Y^{-1}]_{[6]} = 0,
    \>
and to this end we let 
    \<
        \label{eq:XXYYdef}
        \fX_{[2]} := \lf[ \fr{f_{10}^9}{Xf_{50}^5} \rh]_{[2]}  \qquad\text{and}\qquad \fY_{[6]} := \lf[ \fr{f_{10}^9}{Y f_{50}^5} \rh]_{[6]}.
    \>
Since $[g(q)h(q^{10})]_{[r]} = h(q^{10})[g(q)]_{[r]}$ for any $q$-series $g(q)$ and $h(q)$, we have
    \[
        \fX_{[2]} = \fr{f_{10}^9}{f_{50}^5} [X^{-1}]_{[2]} \qquad\text{and}\qquad \fY_{[6]} = \fr{f_{10}^9}{f_{50}^5} [Y^{-1}]_{[6]},
    \]
and thus to establish \eqref{eq:X2Y60} it suffices to show that $\fX_{[2]} = \fY_{[6]} = 0$. 
As we have
    \<
    \label{eq:fX2-uAndf}
        \fX_{[2]} = \lf[Yf_{10}^4 \cdot \fr{f_{10}^{6}}{f_2 f_{50}^5}\rh]_{[2]} = \lf[ u_1^{\dag}u_2u_3u_4^{\dag} \cdot \fr{f_{10}^6}{f_2 f_{50}} \rh]_{[2]},
    \>
and similarly for $\fY_{[6]}$ (after replacing $u_1^{\dag}u_2u_3u_4^{\dag}$ with $u_1u_2^{\dag}u_3^{\dag}u_4$), our plan is to use the 10-dissections computed in section \ref{sec:dissections} to compute $\fX_{[2]}$ and $\fY_{[6]}$ and then use various $q$-series identities to reduce these both to 0.

Thus, the proof of Theorem \ref{thm:X2Y6} is now reduced to an exercise in symbolic manipulation, so for the sake of legibility we recall that 
    \[
        \xF = f_{50}^2 / f_{100} \qquad\text{and}\qquad \cR = R(q^{10}), 
    \]
and we write
    \[
        \xa=U_1, \quad \xb=U_2, \quad \xg=U_3, \quad\text{and}\quad \xd=U_4.
    \]
With this notation, equation \eqref{eq:phi2R} states that
    \<
    \label{eq:phi2Rrecall}
    \begin{aligned}
        \fr{f_{10}^6}{f_2 f_{50}^5} = \cR^{-4} + q^2 \cR^{-3} + 2q^4\cR^{-2} + 3q^6\cR^{-1} + 5q^8 -3q^{10}\cR + 2q^{12}\cR^{2} -q^{14}\cR^3 + q^{16}\cR^4,
    \end{aligned}
    \>
and Lemma \ref{lem:uiDissections} states that
    \begin{align}
        \label{eq:u1-u3dg}
        u_1u_3^{\dag} &= \lf(\xa+q^2\xa \cR - q^4\xg - 2q^6\xb\xg \xF^{-1} + 0q^8 \rh) \xF, \\
        \label{eq:u1dg-u3}
        u_1^{\dag}u_3 &= \lf(\xa - q^2\xg\cR^{-1} + q^4\xg + 0q^6 - 2q^8\xa\xd\xF^{-1} \rh)\xF,
    \end{align}
and
    \begin{align}
        \label{eq:u2-u4dg}
        u_2u_4^{\dag} &= \xa^2 + q^1\xb\xF + 0q^2 - q^3\xb^2 - q^4 \xa\xg - q^{15}\xd^2 + 0q^6 - q^7\xd\xF - q^8\xg^2 + q^9\xb\xd,\\
        \label{eq:u2dg-u4}
        u_2^{\dag}u_4 &= \xa^2 - q^1\xb\xF + 0q^2 + q^3\xb^2 - q^4\xa\xg + q^{15}\xd^2 + 0q^6 + q^7\xd\xF - q^8\xg^2 - q^9\xb\xd.
    \end{align} 
Similarly, equations \eqref{eq:VRplus} and \eqref{eq:VRminus} of Lemma \ref{lem:Udiff} are now written as
    \<
        \label{eq:Rpmalpha}
        \xa\xF\cR = \xa\xb-q^{10}\xg\xd \qquad\text{and}\qquad \xg\xF\cR^{-1} = \xa\xb+q^{10}\xg\xd,
    \>
respectively, and equation \eqref{eq:RU} is written as
    \<
        \label{eq:Ralpha}
        \cR = \nfr{\xa\xd}{\xb\xg}.
    \>
In addition, we have the following immediate but important corollary to Lemma \ref{lem:Udiff}.

\begin{corollary}
    \label{cor:Adiff0}
    One has
        \<
            \label{eq:diff0}
            \xa^3\xd^2(\xa\xb + q^{10}\xg\xd) - \xb^2\xg^3(\xa\xb - q^{10}\xg\xd) = 0.
        \>
\end{corollary}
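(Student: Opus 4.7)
The plan is to directly substitute the explicit expressions for $\alpha\beta\pm q^{10}\gamma\delta$ coming from Lemma \ref{lem:Udiff} and observe that the two resulting terms cancel.

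More concretely, I would first use \eqref{eq:Ralpha} to eliminate $\mathcal{R}$ from \eqref{eq:Rpmalpha}. Substituting $\mathcal{R} = \alpha\delta/(\beta\gamma)$ into the first equation of \eqref{eq:Rpmalpha} gives
\[
    \alpha\beta - q^{10}\gamma\delta \;=\; \alpha\Phi\mathcal{R} \;=\; \frac{\alpha^2\delta\Phi}{\beta\gamma},
\]
and substituting $\mathcal{R}^{-1} = \beta\gamma/(\alpha\delta)$ into the second equation of \eqref{eq:Rpmalpha} gives
\[
    \alpha\beta + q^{10}\gamma\delta \;=\; \gamma\Phi\mathcal{R}^{-1} \;=\; \frac{\beta\gamma^2\Phi}{\alpha\delta}.
\]

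I would then insert these two identities into the left-hand side of \eqref{eq:diff0}. The first summand becomes $\alpha^3\delta^2\cdot \beta\gamma^2\Phi/(\alpha\delta) = \alpha^2\beta\gamma^2\delta\Phi$, and the second summand becomes $\beta^2\gamma^3\cdot\alpha^2\delta\Phi/(\beta\gamma) = \alpha^2\beta\gamma^2\delta\Phi$. The two are identical, so their difference vanishes.

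There is no real obstacle here — the identity is set up so that the cube and square exponents on $\alpha,\beta,\gamma,\delta$ in \eqref{eq:diff0} are exactly what is needed to absorb the factors $\alpha\delta/(\beta\gamma)$ and $\beta\gamma/(\alpha\delta)$ that appear when $\mathcal{R}$ is eliminated; the only thing to watch is bookkeeping of the monomials in $\alpha,\beta,\gamma,\delta,\Phi$ to confirm the cancellation.
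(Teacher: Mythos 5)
Your proof is correct and is essentially identical to the paper's: the paper likewise combines \eqref{eq:Rpmalpha} with $\cR=\xa\xd/(\xb\xg)$ to rewrite $\xa\xb+q^{10}\xg\xd$ and $\xa\xb-q^{10}\xg\xd$ as $\xb\xg^2\xF/(\xa\xd)$ and $\xa^2\xd\xF/(\xb\xg)$, after which both terms reduce to $\xa^2\xb\xg^2\xd\xF$ and cancel. No issues.
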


\begin{proof}
    By applying \eqref{eq:Rpmalpha} and then \eqref{eq:Ralpha}, the left-hand side of \eqref{eq:diff0} is immediately seen to be $\xa^3\xd^2\big(\fr{\xg^2\xb\xF}{\xa\xd}\big) - \xb^2\xg^3\big(\fr{\xa^2\xd\xF}{\xb\xg}\big) = 0$.
\end{proof}

\begin{remark}[An outline of our methods]
\label{rem:Method}
    Unfortunately, as may be seen across various $q$-series papers (see, e.g., \cites{hirschhorn:remarkable,chern:Vanishing5,mclaughlin:vanishing}), there is no apparent ``uniform'' method for simplifying large $q$-series expressions that we may employ here. However, we are able to outline the common ``algorithm'' we use in proving Theorems \ref{thm:X2Y6} and \ref{thm:XY8-XY0}.
    \begin{enumerate}
        \item \label{prin:0} We first use formulae \eqref{eq:phi2Rrecall}--\eqref{eq:u2dg-u4} to compute expressions for $\fX_{[2]}$ and $\fY_{[6]}$. Because we are dealing with type-10 components, and in fact showing that $\fX_{[2]} = \fY_{[6]} = 0$, we may freely multiply our expressions by powers of $\cR=R(q^{10})$ to facilitate our use of \eqref{eq:Rpmalpha}.
        \item \label{prin:1} Next, we use relations \eqref{eq:Rpmalpha} to eliminate $\xF$ from our expressions.  
        \item \label{prin:2} Once $\xF$ is eliminated, we eliminate $\cR$ using the identity $\cR=\xa\xd/\xb\xg$.
        \item \label{prin:3} Lastly, we simplify and factor our expressions, and locate a factor of 
            \[ 
                \xa^3\xd^2(\xa\xb + q^{10}\xg\xd) - \xb^2\xg^3(\xa\xb - q^{10}\xg\xd),
            \]
        which is 0 by Corollary \ref{cor:Adiff0}.
    \end{enumerate}
\end{remark}

We now start our consideration of $\fX_{[2]}$. First, using equations \eqref{eq:phi2Rrecall}--\eqref{eq:u2dg-u4} and then multiplying by $\cR^{-2}$ for convenience, we have the chain of equalities
    \begin{align}
    \label{eq:XXp2}
    \begin{aligned}
        \fX_{[2]}\cR^{-2} &= 
            \lf[ Y f_{10}^4 \cdot \fr{f_{10}^6}{f_2f_{50}^5}\rh]_{[2]}\cR^{-2}
            = \lf[ u_1^{\dag}u_2u_3u_4^{\dag} \cdot \fr{f_{10}^6}{f_2f_{50}^5} \rh]_{[2]}\cR^{-2} \\
        &= 2q^{10}\xa^2\xg\xd\cR^{-6}-4q^{10}\xa^3\xd\cR^{-4} +6q^{20}\xa\xg^2\xd\cR^{-3} -6q^{20}\xa^2\xg\xd\cR^{-1} \\
        &\qquad +2q^{20}\xa^3\xd\cR +2q^{30}\xa\xg^2\xd\cR^2 +\xa^3\xF\cR^{-5} +2q^{10}\xa^3\xF +(3q^{20}\xg^2)\xa\xF\cR \\
        &\qquad +\lf(-\xa^2\cR^{-6} -q^{10}\xa\xg\cR^{-3} +3q^{10}\xa^2\cR^{-1} +5q^{20}\xg^2\rh)\xg\xF\cR^{-1}.
    \end{aligned}
    \end{align}
Next, using \eqref{eq:Rpmalpha} changes this to
    \<
    \label{eq:XXp3}
    \begin{aligned}
        \fX_{[2]}\cR^{-2} 
        &= -\xa^3\xb\cR^{-6} +q^{10}\xa^2\xg\xd\cR^{-6} -4q^{10}\xa^3\xd\cR^{-4} -q^{10}\xa^2\xb\xg\cR^{-3} +5q^{20}\xa\xg^2\xd\cR^{-3}  \\
        &\qquad +3q^{10}\xa^3\xb\cR^{-1} -3q^{20}\xa^2\xg\xd\cR^{-1} +8q^{20}\xa\xb\xg^2 +2q^{30}\xg^3\xd +2q^{20}\xa^3\xd\cR \\
        &\qquad +2q^{30}\xa\xg^2\xd\cR^2 +\xa^3\xF\cR^{-5} +2q^{10}\xa^3\xF.
    \end{aligned}
    \>
Multiplying \eqref{eq:XXp3} by $\cR^6$ and identifying
    \[
        \xa^3\xF\cR + 2q^{10}\xa^3\xF\cR^6  = \xa^3\xb - q^{10}\xa^2\xg\xd + 2q^{10}\xa^3\xb\cR^5 - 2q^{20}\xa^2\xg\xd\cR^5,
    \]
we further have
    \<
    \begin{aligned}
        \fX_{[2]}\cR^{4} &= -4q^{10}\xa^3\xd\cR^2 -q^{10}\xa^2\xb\xg\cR^3 +5q^{20}\xa\xg^2\xd\cR^3 +5q^{10}\xa^3\xb\cR^5  -5q^{20}\xa^2\xg\xd\cR^5 \\
        &\qquad +8q^{20}\xa\xb\xg^2\cR^6 +2q^{30}\xg^3\xd\cR^6 +2q^{20}\xa^3\xd\cR^7 +2q^{30}\xa\xg^2\xd\cR^8.
    \end{aligned}
    \>
Using the relation $\cR=\xa\xd/\xb\xg$ to eliminate $\cR$ from the right-hand side here and then simplifying, we find (or at least may verify) that
    \[
        \fX_{[2]}\cR^4 = \Big(\xa^3\xd^2(\xa\xb + q^{10}\xg\xd) - \xb^2\xg^3(\xa\xb - q^{10}\xg\xd)\Big)\left( \fr{5q^{10}\xa^4\xd^3}{\xb^5\xg^5} + \fr{2q^{20}\xa^6\xd^6}{\xb^8\xg^7} \right),
    \]
which by Corollary \ref{cor:Adiff0} implies that $\fX_{[2]}=0$, as claimed.

We now turn to $\fY_{[6]}$. Similar to our derivation of \eqref{eq:XXp2}, we again use equations \eqref{eq:phi2Rrecall}--\eqref{eq:u2dg-u4} and then multiply by $\cR^{-4}$ for convenience to find that
    \[
    \begin{aligned}
        \fY_{[6]}\cR^{-4} 
        &= \lf[ Xf_{10}^4 \cdot \fr{f_{10}^6}{f_2f_{50}^5} \rh]_{[6]}\cR^{-4} =\lf[ u_1u_2^{\dag}u_3^{\dag}u_4 \cdot \fr{f_{10}^6}{f_2f_{50}^5} \rh]_{[6]}\cR^{-4} \\
        &= -2\xa^2\xb\xg\cR^{-8} +2q^{10}\xb\xg^3\cR^{-7} +6q^{10}\xa\xb\xg^2\cR^{-5} +6q^{10}\xa^2\xb\xg\cR^{-3} \\
        &\qquad  +4q^{20}\xb\xg^3\cR^{-2} +2q^{20}\xa\xb\xg^2 +5\xa^3\xF\cR^{-5} -(q^{20}\xg^2)\xa\xF\cR \\
        &\qquad + \lf( -3\xa^2\cR^{-6} +2q^{10}\xg^2\cR^{-5} -3q^{10}\xa\xg\cR^{-3} -q^{10}\xa^2\cR^{-1} -q^{20}\xg^2 \rh)\xg\xF\cR^{-1}.
        \end{aligned}
    \]
Next, using \eqref{eq:Rpmalpha} changes this to
\begin{align}
\label{eq:YYp3}
    &\begin{aligned}
        \fY_{[6]}\cR^{-4} &= -2\xa^2\xb\xg\cR^{-8} +2q^{10}\xb\xg^3\cR^{-7} -3\xa^3\xb\cR^{-6} -3q^{10}\xa^2\xg\xd\cR^{-6} \\
        &\qquad +8q^{10}\xa\xb\xg^2\cR^{-5} +2q^{20}\xg^3\xd\cR^{-5} +3q^{10}\xa^2\xb\xg\cR^{-3} -3q^{20}\xa\xg^2\xd\cR^{-3} \\
        &\qquad +4q^{20}\xb\xg^3\cR^{-2} -q^{10}\xa^3\xb\cR^{-1} -q^{20}\xa^2\xg\xd\cR^{-1} +5\xa^3\xF\cR^{-5}.
    \end{aligned}
\end{align}
Multiplying \eqref{eq:YYp3} by $\cR^{6}$ and identifying 
    \[
        5\xa^3\xF\cR = 5\xa^3\xb -5q^{10}\xa^2\xg\xd,
    \]
we have
    \[
    \begin{aligned}
        \fY_{[6]}\cR^2 &= -2\xa^2\xb\xg\cR^{-2} +2q^{10}\xb\xg^3\cR^{-1} +2\xa^3\xb -8q^{10}\xa^2\xg\xd +8q^{10}\xa\xb\xg^2\cR +2q^{20}\xg^3\xd\cR \\
        &\qquad  +3q^{10}\xa^2\xb\xg\cR^3 -3q^{20}\xa\xg^2\xd\cR^3 +4q^{20}\xb\xg^3\cR^4 -q^{10}\xa^3\xb\cR^5 -q^{20}\xa^2\xg\xd\cR^5.
    \end{aligned}
    \]
Again using the equality $\cR=\xa\xd/\xb\xg$ to eliminate $\cR$ from the right-hand side here, and then simplifying, yields the relation
    \[
        \fY_{[6]}\cR^2 = \Big( \xa^3\xd^2(\xa\xb +q^{10}\xg\xd)-\xb^2\xg^3(\xa\xb -q^{10}\xg\xd) \Big) \lf(\fr{2}{\xa\xd^2} +\fr{2q^{10}\xa\xd}{\xb^3\xg^2} -\fr{q^{10}\xa^4\xd^3}{\xb^5\xg^5}\rh).
    \]
Using Corollary \ref{cor:Adiff0} we deduce that $\fY_{[6]} = 0$, which concludes the proof of Theorem \ref{thm:X2Y6}.

\section{The proof of Theorem \ref{thm:XY8-XY0}}
Establishing that $\mathfrak{p}(10j,\chi_5^\dagger) = \mathfrak{p}(10j,\chi_5)$ and $\mathfrak{p}(10j+8,\chi_5^\dagger) = -\mathfrak{p}(10j+8,\chi_5)$ for $j \geq 0$, or equivalently that
    \[
        [Y^{-1}]_{[0]} = [X^{-1}]_{[0]} \qquad\text{and}\qquad [Y^{-1}]_{[8]} = -[X^{-1}]_{[8]}, 
    \]
is done using the same methods as in the proof of Theorem \ref{thm:X2Y6}, so we merely outline proofs below. Akin to the definitions \eqref{eq:XXYYdef} of $\fX_{[2]}$ and $\fY_{[6]}$ we define
    \<
        \fX_{[0]} = \lf[ \fr{f_{10}^9}{Xf_{50}^5} \rh]_{[0]} \qquad\text{and}\qquad \fY_{[0]} = \lf[ \fr{f_{10}^9}{Y f_{50}^5} \rh]_{[0]},
    \>
and similarly for $\fX_{[8]}$ and $\fY_{[8]}$. 

Again using \eqref{eq:phi2Rrecall}--\eqref{eq:u2dg-u4} we find that
    \[
    \begin{aligned}
        &\fX_{[0]} -\fY_{[0]} \\
        &\qquad = -2q^{10}\xa\xb\xg^2\cR^{-4} -2q^{10}\xa^3\xd\cR^{-3} +4q^{10}\xa^2\xb\xg\cR^{-2} +4q^{20}\xa\xg^2\xd\cR^{-2} -6q^{20}\xb\xg^3\cR^{-1}\\
        &\qquad\qquad +10q^{20}\xa^2\xg\xd +6q^{20}\xa\xb\xg^2\cR -4q^{20}\xa^3\xd\cR^2 -2q^{20}\xa^2\xb\xg\cR^3 -2q^{30}\xa\xg^2\xd\cR^3 \\
        &\qquad\qquad -2q^{30}\xb\xg^3\cR^4 + (q^{10}\xg^2\cR^{-4} +q^{10}\xa\xg\cR^{-2} +3q^{10}\xa^2)\xg\xF\cR^{-1} -13q^{20}\xg^3\xF \\
        &\qquad\qquad - (5q^{10}\xa^2 +8q^{20}\xg^2\cR -q^{20}\xa\xg\cR^3)\xa\xF\cR,
    \end{aligned}
    \]
after which the ``steps'' from Remark \ref{rem:Method} directly lead us to the equality
    \[
        (\fX_{[0]} -\fY_{[0]})\cR^{-1} = \Big(\xa^3\xd^2(\xa\xb + q^{10}\xg\xd) -\xb^2\xg^3(\xa\xb -q^{10}\xg\xd)\Big) \lf(\fr{\xb^3\xg^4}{\xa^5\xd^5} -\fr{5q^{10}}{\xb^2\xg} -\fr{2\xb\xg}{\xa^2\xd^3}\rh).
    \]
Using Corollary \ref{cor:Adiff0} we conclude that $\fX_{[0]} = \fY_{[0]}$, and it follows that $[Y^{-1}]_{[0]} = [X^{-1}]_{[0]}$. 

Our proof that $[Y^{-1}]_{[8]} = -[X^{-1}]_{[8]}$ proceeds similarly. Namely, we first find that
    \<
    \begin{aligned}
        &\fX_{[8]}+\fY_{[8]} \\
        &\qquad = -2\xa^3\xd\cR^{-4} -2\xa^2\xb\xg\cR^{-3} +2q^{10}\xa\xg^2\xd\cR^{-3} +4q^{10}\xb\xg^3\cR^{-2} +6q^{10}\xa^2\xg\xd\cR^{-1} \\
        &\qquad\qquad +10q^{10}\xa\xb\xg^2 +6q^{10}\xa^3\xd\cR -4q^{10}\xa^2\xb\xg\cR^2 +4q^{20}\xa\xg^2\xd\cR^2 -2q^{20}\xb\xg^3\cR^3 \\
        &\qquad\qquad +2q^{20}\xa^2\xg\xd\cR^4 - (\xa\xg\cR^{-3} +8\xa^2\cR^{-1} -5q^{10}\xg^2)\xg\xF\cR^{-1} +13\xa^3\Phi \\
        &\qquad\qquad + (3q^{10}\xg^2 -q^{10}\xa\xg\cR^2 +q^{10}\xa^2\cR^4)\xa\xF\cR,
    \end{aligned}
    \notag
    \>
after which the ``steps'' of Remark \ref{rem:Method} lead us to the relation
    \[
        (\fX_{[8]}+\fY_{[8]})\cR = \Big(\xa^3\xd^2(\xa\xb + q^{10}\xg\xd) - \xb^2\xg^3(\xa\xb - q^{10}\xg\xd)\Big) \lf(\fr{5}{\xa\xd^2} + \fr{2q^{10}\xa\xd}{\xb^3\xg^2} + \fr{q^{10}\xa^4\xd^3}{\xb^5\xg^5}\rh).
    \]
Again by Corollary \ref{cor:Adiff0} it follows that $\fY_{[8]} = -\fX_{[8]}$ and that $[Y^{-1}]_{[8]} = -[X^{-1}]_{[8]}$, which completes the proof of Theorem \ref{thm:XY8-XY0}.

\bibliography{vanishing.bib}
\end{document}